\numberwithin{equation}{section} 
\newtheorem{thm}{Theorem}[section]
\newtheorem{lem}[thm]{Lemma}
\newtheorem{conj}[thm]{Conjecture}
\begin{document}

\title{Explicit expressions for the moments of the size of an $(s,s+1)$-core partition with distinct parts}

\author{
Anthony Zaleski\thanks{Department of Mathematics, Rutgers University (New Brunswick), 110 Frelinghuysen Road, Piscataway, NJ 08854-8019, USA.}
}

\maketitle
\begin{abstract}
For fixed $s$, the size of an $(s, s+1)$-core partition with distinct parts can be seen as a random variable $X_s$.  Using computer-assisted methods, we derive formulas for the expectation, variance, and higher moments of $X_s$ in terms of $s$.  Our results give good evidence that $X_s$ is asymptotically normal.
\end{abstract}

\textbf{Keywords:} simultaneous core partitions, automated enumeration, combinatorial statistics, asymptotic normality

\textbf{MSC:} 05A17, 05A15, 05A16, 05E10
\\
\\
\emph{A version of this paper was published in \emph{Advances in Applied Mathematics}, Volume 84, March 2017, pp. 1-7.}
\section{Introduction: the size of an $(s,t)$-core partition}\label{sec:intro}

\begin{figure}[ht]
\begin{center}
\includegraphics[width=.5\textwidth]{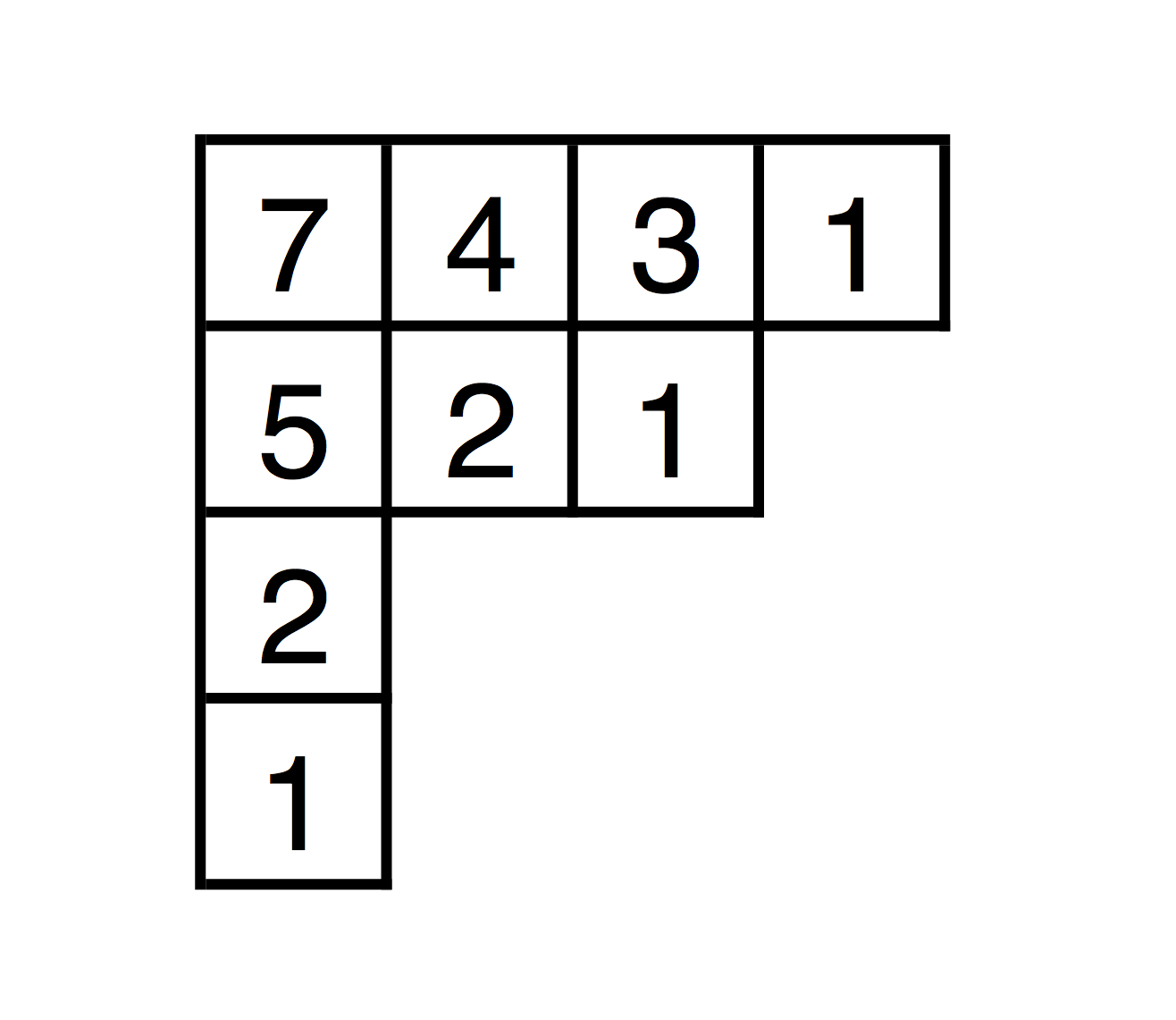}
\caption{Young diagram of the partition $9=4+3+1+1,$ showing the hook lengths of each box.}
\end{center}
\end{figure}

Recall that the \emph{hook length} of a box in the Young diagram of a partition is the number of boxes to the right (the arm) plus the number of boxes below it (the leg) plus one (the head).  (We use the English convention for Young diagrams; see Figure 1.)   A partition is an $s$-\emph{core} if its Young diagram avoids hook length $s$ and an $(s,t)$-\emph{core} if it avoids hook lengths $s$ and $t$ [AHJ].  For example, the partition $9=4+3+1+1$ in Figure 1 is a $(6,8)$-core but not a $(6,7)$-core.

The number of $(s,t)$-core partitions is finite iff $s$ and $t$ are coprime, which we shall assume from now on [AHJ].  Let $X_{s,t}$ be the random variable ``size of an $(s,t)$-core partition," where the sample space is the set of all $(s,t)$-core partitions, equipped with the uniform distribution.  In [EZ], with the help of Maple, Zeilberger derived explicit expressions for the expectation, variance, and numerous higher moments of $X_{s,t}$.  The original paper noted that ``From the `religious-fanatical' viewpoint of the current `mainstream' mathematician, they are `just' conjectures, but nevertheless, they are \textbf{absolutely certain} (well, at least as absolutely certain as most proved theorems)," and a donation to the OEIS was offered for the theory to make the results rigorous.  Later, it was found that such theory did exist and the results are entirely rigorous; see the updates at the paper's site.  

Zeilberger also computed some standardized central moments of $X_{s,t}$ and the limit of these expressions as $s,t\to \infty$ with $s-t$ fixed.  From this he conjectured the limiting distribution.  Perhaps surprisingly, it is abnormal.

Here, we  continue the experimental approach taken up in [EZ].  However, we  consider $(s,t)$-core partitions with \emph{distinct} parts.  Further, we make the restriction $t=s+1$.  Using Maple, we are able to again conjecture, and in this case \emph{rigorously prove} the validity of, explicit expressions for the moments in terms of $s$.  Further, we show that the limiting distribution \emph{does} appear normal in this case.
\section{Distinct part $(s,s+1)$-core partitions}
\subsection{Computing the generating function}

Given a positive integer $s$, let $P_s$ be the set of all $(s,s+1)$-core partitions with distinct parts.  Observe that $|P_s|$ is always finite.  Let $X_s$ be the random variable ``size of a partition in $P_s$."  Our goal is to have an efficient way to compute the generating function 
$$
G_s(q):=\sum_{p\in P_s} q^{|p|}
$$
for fixed $s$.  (Here $|p|$ denotes the size of a partition $p$.)  This will then allow us to compute moments of $X_s$.

Recall that the \emph{perimeter} of a partition is the size of the largest hook length.  Straub [S]  gives a useful characterization of $P_s$ in terms of perimeters:
\\
 \begin{lem}[{Lemma 2.2 of [S]}]  \label{lem:perim}
 A partition into distinct parts is an $(s,s+1)$-core iff it has perimeter $<s$.
\end{lem}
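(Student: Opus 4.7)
The plan is to prove the two implications of the ``iff'' separately. The easy direction---that perimeter $< s$ implies $(s,s+1)$-core---holds with no reference to distinct parts: every hook length of $\lambda$ is at most the perimeter, so if the perimeter is $\leq s-1$, then no hook length of $\lambda$ equals $s$ or $s+1$.

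For the converse I will prove the contrapositive: if $\lambda$ has distinct parts $\lambda_1 > \cdots > \lambda_k \geq 1$ and perimeter $h_1 := \lambda_1 + k - 1 \geq s$, then some hook length of $\lambda$ equals $s$ or $s+1$. The key intermediate fact is a description of the first-row hook lengths in terms of the first-column hook lengths $h_i := \lambda_i + k - i$: namely, the hook lengths appearing in row $1$ of $\lambda$ are precisely
\[
\{1, 2, \ldots, h_1\} \setminus \{h_1 - h_i : 2 \leq i \leq k\}.
\]
I would establish this by tracking $H_j = \lambda_1 - j + 1 + \#\{i \geq 2 : \lambda_i \geq j\}$ as $j$ ranges from $1$ to $\lambda_1$: on each interval $\lambda_{i+1} < j \leq \lambda_i$ (with the convention $\lambda_{k+1} = 0$), $H_j$ decreases by $1$ each time $j$ increases by $1$, and the values $h_1 - h_i$ for $2 \leq i \leq k$ are exactly the integers in $\{1, \ldots, h_1\}$ that are skipped between consecutive intervals.

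The finish is then quick. Distinctness of the parts forces $h_i - h_{i+1} = (\lambda_i - \lambda_{i+1}) + 1 \geq 2$, so any two elements of the removed set $\{h_1 - h_i : 2 \leq i \leq k\}$ differ by at least $2$; in particular, this set contains no two consecutive integers. Hence every pair of consecutive integers in $\{1, \ldots, h_1\}$ meets the first-row hook length set. Applied to $\{s, s+1\} \subseteq \{1, \ldots, h_1\}$ when $h_1 \geq s+1$, this produces a hook of length $s$ or $s+1$; in the boundary case $h_1 = s$ the corner box itself already has hook length $s$.

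The main obstacle is the bookkeeping behind the first-row hook length formula. It is essentially folklore and can also be extracted from the $\beta$-set / abacus description of $\lambda$, but I would want to verify the index arithmetic carefully (and perhaps against the example in Figure~1) before invoking it. Once this formula is in hand, the distinct-parts gap condition $h_i - h_{i+1} \geq 2$ closes the argument in a single observation, so the rest of the proof is essentially free.
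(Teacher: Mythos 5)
Your argument is correct, but note that the paper does not prove this lemma at all: it is imported verbatim as Lemma 2.2 of Straub [S], so there is no internal proof to compare against. What you have written is a valid self-contained proof, and it is essentially the standard $\beta$-set/abacus argument (close in spirit to Straub's own). The forward direction is immediate since the perimeter is the maximal hook length. For the converse, your key formula is right: the first-row hook lengths of any partition $\lambda$ with first-column hook lengths $h_1 > h_2 > \cdots > h_k$ are exactly $\{1,\dots,h_1\}\setminus\{h_1-h_i : 2\le i\le k\}$; your bookkeeping via $H_j$ checks out (at each step $j\mapsto j+1$ the quantity $H_j$ drops by $1$, or by $2$ when $j-1$ equals some part $\lambda_i$ with $i\ge 2$, and the skipped value is precisely $H_{\lambda_i}-1 = h_1-h_i$), and it agrees with the example of Figure 1, where the first-column hooks are $7,5,2,1$ and the first-row hooks are $\{1,\dots,7\}\setminus\{2,5,6\}=\{1,3,4,7\}$. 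Distinctness gives $h_i-h_{i+1}=(\lambda_i-\lambda_{i+1})+1\ge 2$, so the deleted set contains no two consecutive integers, and hence whenever $s+1\le h_1$ one of $s,s+1$ survives as a first-row hook; the boundary case $h_1=s$ is the corner box itself. The only cosmetic caveat is that the first-row hook-length description holds for arbitrary partitions, so it is worth stating that distinctness enters only through the gap condition on the $h_i$, which is where the hypothesis is genuinely used.
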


From this, Straub also proved Amdeberhan's [A] conjecture that the number of $(s,s+1)$-core partitions with distinct  parts is given by the Fibonacci number: 
\begin{equation}\label{eq:fib}
|P_s|=G_s(1)=F_{s+1}.
\end{equation}

Lemma \ref{lem:perim} gives us a fast way to compute $G_s(q)$.   Define $P_{k,l}$ to be the set of partitions with $l$ distinct parts and largest part $k$.  By the Lemma, a partition $p$ is an $(s,s+1)$-core iff $p \in P_{k,l}$ for some $k+l \leq s$.  Define
$$G_{k,l}(q):=\sum_{p\in P_{k,l}} q^{|p|}.$$
This generating function is computed recursively by \verb+Gkl(q,k,l)+ in the Maple package; see Section 3 for information on the package. Finally, summing $G_{k,l}(q)$ for $k+l\leq s$ gives us $G_s(q)$, implemented in the procedure \verb+Gs(q,s)+ in the Maple package. 
\subsection{Conjecturing moments}

Using \verb+Gs(q,s)+, we can now compute moments of $X_s$ for a given $s$.  Defining the operator $L:f(q)\mapsto qf'(q)$, recall that the $k^\text{th}$ moment of $X_s$ is 
\begin{equation} \label{eq:mom}
\mathbb{E}[X_s^k]= \left. \frac{L^k(G_s(q))}{G_s(q)} \right|_{q=1} = \frac{L^k(G_s(q))(1)}{F_{s+1}},\end{equation}
where we have used \eqref{eq:fib}.

Suppose we fix $k$. Then the numerator, call it $P(s)$, in \eqref{eq:mom} depends only on $s$.  Experimental evidence indicates that $P(s)$ is of the form $A(s)F_s+B(s)F_{s+1}$ for some polynomials $A,B$.  Further, we can use the procedure \verb+GuessFibPol(L,n)+ to guess $A,B$ from computed values of $P(s)$.  

To summarize, we conjecture that for $k$ fixed, there exist polynomials $A(s),B(s)$ such that
\begin{equation} \label{eq:fibpol}
\mathbb{E}[X_s^k]= A(s)\frac{F_s}{F_{s+1}}+B(s),
\end{equation}
and (for fixed $k$), these polynomials can be guessed from  data supplied by \verb+Gs(q,s)+.  

\subsection{Proving the conjectures}

Now we go over the theory needed to validate the above conjectures.  Recall that the $q$-binomial coefficient ${m+n \choose m}_q$ gives the generating function for partitions whose Young diagrams fit inside an $m\times n$ rectangle.  In other words, ${m+n \choose m}_q$ is the sum of $q^{|p|}$, where $p$ ranges over partitions with $\leq m$ parts and largest part $\leq n$. Let us denote these by ``$m \times n$ partitions.''  
\\
\begin{lem}\label{lem:qb}
The generating function (according to size) of partitions $\lambda=(\lambda_1,\dots,\lambda_m)$ with $m$ distinct parts, each  satisfying $\lambda_i\leq n$, is
$$
\sum_{k\leq n} G_{k,m}(q)=q^{m+1 \choose 2} {n \choose m}_q.
$$
Thus,
$$
G_s(q) = \sum_{m=0}^s \sum_{k \leq s-m} G_{k,m}(q)
		   = \sum_{m=0}^s q^{m+1 \choose 2} {s-m \choose m}_q.
$$
\end{lem}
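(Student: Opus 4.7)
My plan is to prove the first identity via the standard ``subtract a staircase'' bijection, then derive the second identity by combining with Lemma \ref{lem:perim}.

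First I would fix $m$ and $n$ and consider a partition $\lambda = (\lambda_1 > \lambda_2 > \dots > \lambda_m \geq 1)$ with $m$ distinct parts, each $\leq n$. Define $\mu_i := \lambda_i - (m-i+1)$ for $i = 1, \dots, m$. The strict inequalities $\lambda_i > \lambda_{i+1}$ become weak inequalities $\mu_i \geq \mu_{i+1}$, and $\lambda_m \geq 1$ gives $\mu_m \geq 0$, while $\lambda_1 \leq n$ gives $\mu_1 \leq n - m$. Thus $\mu = (\mu_1, \dots, \mu_m)$ is an arbitrary partition with at most $m$ parts, each of size at most $n - m$, i.e., an $m \times (n-m)$ partition in the sense of the preceding paragraph. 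This correspondence is clearly a bijection (its inverse adds back the staircase $(m, m-1, \dots, 1)$), and the sizes satisfy
\[
|\lambda| = |\mu| + \sum_{i=1}^m (m - i + 1) = |\mu| + \binom{m+1}{2}.
\]

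Summing $q^{|\lambda|}$ over all such $\lambda$, and using the definition of the $q$-binomial coefficient as the generating function for $m \times (n-m)$ partitions, gives
\[
\sum_{k \leq n} G_{k,m}(q) = \sum_\lambda q^{|\lambda|} = q^{\binom{m+1}{2}} \sum_\mu q^{|\mu|} = q^{\binom{m+1}{2}} \binom{n}{m}_q,
\]
which proves the first formula.

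For the second formula, I would invoke Lemma \ref{lem:perim}. A partition with largest part $k$ and number of parts $m$ has perimeter $k + m - 1$, so by the lemma, a distinct-part partition is an $(s,s+1)$-core iff $k + m \leq s$, i.e., $k \leq s - m$. Splitting $P_s$ according to the number of parts $m$ (which ranges over $0, 1, \dots, s$) and summing the contribution from each group gives
\[
G_s(q) = \sum_{m=0}^s \sum_{k \leq s-m} G_{k,m}(q) = \sum_{m=0}^s q^{\binom{m+1}{2}} \binom{s-m}{m}_q,
\]
by applying the first identity with $n = s - m$. There is no real obstacle here; the only small point to verify carefully is the relationship between the perimeter and the quantity $k + m$, which is immediate from the fact that the hook length of the top-left corner box equals (largest part) + (number of parts) $-\, 1$.
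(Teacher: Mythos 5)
Your proof is correct and uses essentially the same argument as the paper: the staircase bijection between $m\times(n-m)$ partitions and partitions with $m$ distinct parts at most $n$, shifting the size by $\binom{m+1}{2}$ (you describe the subtraction direction, the paper the addition direction). Your explicit verification of the second identity via the perimeter formula $k+m-1$ just restates the reduction the paper already carried out in Section 2.1 before the lemma.
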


\begin{proof}
Note that ${n \choose m}_q$ is the generating function of $m \times (n-m)$ partitions.   Given such a partition $p$, we can add $1,2,3,\dots,m$ to its parts (counting missing parts as having size 0), producing a partition with exactly $m$ distinct parts of size $\leq n$.  This increases $|p|$ by ${m+1 \choose 2}$.  Further, it is easy to see that this operation defines a bijection.
\end{proof}

Now, since $G_s(q)$ is expressed as a $q$-binomial sum, the theory developed by Wilf and Zeilberger in [WZ] guarantees that $G_s(q)$ satisfies a recurrence. We use the procedure \verb+qGuessRec+ in our Maple package to guess the recursion from the first, say, $30$ terms of the sequence $\{G_s(q)\}_s$, obtaining the following:
\begin{align}
\begin{split}\label{eq:rec}
G_1(q)&=1
\\
G_2(q)&=1+q
\\
G_3(q)&=q^2+q+1 
\\
G_4(q)&=2q^3+q^2+q+1
\\
G_s(q)&=G_{s-1}(q)+q^{s-1}G_{s-3}(q)+q^{s-1}G_{s-4}(q).
\end{split}
\end{align}

Later, in hindsight, we were able to derive this recurrence  straight from the formula for $G_s(q)$ in Lemma \ref{lem:qb}.  We used Zeilberger's Maple package qEKHAD (see the book [PWZ]), which is capable of both finding and rigorously proving recurrences satisfied by $q$-binomial sums such as the one in our Lemma.  

From \eqref{eq:rec} it follows that the \emph{moments} of the sequence $\{G_s(q)\}_s$ obey the $C$-finite ansatz.  That is, they satisfy linear recurrences with constant coefficients; see [Z2].  Thus, we need only check our conjectures for finitely many values of $s$ to prove them. (In practice, we checked for 70 values of $s$ to compute expressions for up to the sixteenth moment.)

With these observations and the help of Maple, we are now ready to find explicit expressions for the moments of $X_s$.  Fix $k$. We use the recursion \eqref{eq:rec} to efficiently compute the $k^{\text{th}}$ moment of $X_s$ for many values of $s$.  Following Section 2.2, we then conjecture an expression for the $k^{\text{th}}$
moment of $X_s$ which fits the template from  \eqref{eq:fibpol}.  By the above argument, our conjectured expression is proven for all $s$ if it holds for sufficiently many
values of $s$.  
	

For moments two and higher, it is more meaningful to compute the \emph{central moment}.  Recall that the $k^\text{th}$  central moment of $X$ is $\mathbb{E}[(X-\mu)^k]$, where $\mu$ is the expectation.  For example, the second central moment is the variance.

Expressions for up to moment 16 may be found in the Maple output file \verb+theorems.txt+. (See Section 3.)  Here is a small sample of the results:
\\
\begin{thm}
Let $X_s$ be the random variable ``size of an $(s,s+1)$-core partition with distinct parts.'' Then,
\begin{enumerate}[(i)]
\item 
$$
\mathbb{E}[X_s]={\frac {1}{50}}\,{\frac {5\,{s}^{2}F_{{s+1}}-6\,sF_{{s}}+7\,sF_{{s+1}}
-6\,F_{{s}}}{F_{{s+1}}}}.
$$
\item
\begin{align*}
Var(X_s)&=
\\
&(20\,{s}^{3}F_{{s}}F_{{s+1}}+10\,{s}^{3}{F_{{s+1}}}^{2}-27\,{s}^{2}{F_{
{s}}}^{2}+33\,{s}^{2}F_{{s}}F_{{s+1}}
\\
&+57\,{s}^{2}{F_{{s+1}}}^{2}-54\,s
{F_{{s}}}^{2}-32\,sF_{{s}}F_{{s+1}}+65\,s{F_{{s+1}}}^{2}
\\
&-27\,{F_{{s}}}^{2}-45\,F_{{s}}F_{{s+1}})/(1875F_{s+1}^2).
\end{align*}
\item
The third central moment of $X_s$ is asymptotic to
\begin{align*}
&-(3/31250)(65s^4\phi^3-40s^4\phi^2+222s^3\phi^3-40s^4\phi-218s^3\phi^2
\\&-65s^2\phi^3-106s^3\phi-338s^2\phi^2-390s\phi^3+36s^3-2s^2\phi+110s\phi^2
\\&+108s^2+154s\phi+270\phi^2+108s+90\phi+36)\phi^{-3},
\end{align*}
where $\phi$ is the Golden Ratio. 
\end{enumerate}
\end{thm}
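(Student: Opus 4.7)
My plan is to use the recurrence \eqref{eq:rec} together with the $C$-finite ansatz to reduce each part of the theorem to a finite numerical check.  For fixed $k$, set $P_k(s) := L^k(G_s(q))\big|_{q=1}$, so that \eqref{eq:mom} gives $\mathbb{E}[X_s^k] = P_k(s)/F_{s+1}$.  Since $L$ is a derivation, the generalized Leibniz rule together with $L^i(q^{s-1}) = (s-1)^i q^{s-1}$ gives $L^k(q^{s-1}G_{s-j}(q))\big|_{q=1} = \sum_{i=0}^{k}\binom{k}{i}(s-1)^{k-i}P_i(s-j)$; feeding this into $L^k$ applied to \eqref{eq:rec} yields
\begin{equation*}
P_k(s) - P_k(s-1) - P_k(s-3) - P_k(s-4) = \sum_{i=0}^{k-1}\binom{k}{i}(s-1)^{k-i}\bigl(P_i(s-3)+P_i(s-4)\bigr),
\end{equation*}
a recurrence for $P_k$ whose inhomogeneity involves only lower moments.

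I would then prove by induction on $k$ that $P_k(s)$ is $C$-finite, i.e.\ satisfies a linear recurrence with constant coefficients.  The base case $P_0(s) = F_{s+1}$ is clear.  For the inductive step, recall that the class of $C$-finite sequences is closed under sums, shifts, and multiplication by polynomials in $s$: if $a(s)$ has characteristic roots $\{\alpha_j\}$ with multiplicities $\{m_j\}$, then $s\cdot a(s)$ has the same roots with multiplicities $\{m_j+1\}$.  The right-hand side of the displayed recurrence is therefore $C$-finite by the inductive hypothesis, and applying the constant-coefficient operator with characteristic polynomial $x^4-x^3-x-1$ (which annihilates the homogeneous part) transfers $C$-finiteness to $P_k$ itself.

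Since $A(s)F_s + B(s)F_{s+1}$ is likewise $C$-finite for any fixed polynomials $A,B$, to prove (i) and (ii) I would: (1)~use \verb+GuessFibPol+ on data computed via \eqref{eq:rec} to conjecture candidate polynomials $A,B$; (2)~explicitly bound the order of the $C$-finite recurrence satisfied by the difference $D_k(s):=P_k(s)-A(s)F_s-B(s)F_{s+1}$ (at most the sum of the individual orders); and (3)~verify numerically that $D_k(s)=0$ on at least that many consecutive inputs.  Since a $C$-finite sequence of order $r$ vanishing on $r$ consecutive terms vanishes identically, (i) and (ii) follow.  For (iii), I would apply the same machinery to the exact third central moment, derived from $\mathbb{E}[(X_s-\mu_s)^3] = \mathbb{E}[X_s^3] - 3\mu_s\mathbb{E}[X_s^2] + 2\mu_s^3$ in terms of $P_1,P_2,P_3$, and then substitute $F_s/F_{s+1} \to 1/\phi$ (the corrections being exponentially small in $s$) to extract the stated asymptotic.

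The principal obstacle is mechanical rather than conceptual: making the order bounds in step~(2) explicit enough to pin down how many base cases must actually be checked, and then carrying out the ballooning symbolic computations up through $k=16$, both essentially demand the Maple infrastructure described in Section~3.  Once that infrastructure is in place, the entire theorem collapses to a large but routine finite verification.
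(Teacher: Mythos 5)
Your proposal follows essentially the same route as the paper: express the moments via the recurrence \eqref{eq:rec}, observe that they obey the $C$-finite ansatz, and reduce the conjectured Fibonacci-polynomial expressions to a finite check on sufficiently many values of $s$ (the paper checks 70). Your Leibniz-rule derivation of the recurrence for $P_k(s)$ and the induction on $k$ correctly fill in details the paper leaves implicit when it asserts that $C$-finiteness "follows from \eqref{eq:rec}."
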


Note that in (iii), we print the asymptotic result simply because the exact expression would take up too much space. Also, (i) is an explicit version of Conjecture 11.9(d) made by Amdeberhan [A] and later proven by Xiong [X].
\subsection{Limiting distribution}

Once we compute the central moments, we can \emph{standardize} them.  Recall that the $k^\text{th}$  standardized central moment of $X$ is $\mathbb{E}[(X-\mu)^k]/\sigma^k$, where $\mu$ is the expectation and $\sigma$ is the standard deviation.  For example, the second standardized central moment is always $1$.  The normal distribution famously has a sequence of standardized central moments which alternates between $0$ and odd factorials: $0,1, 0, 3, 0, 15, 0, 105, 0, 945, 0, 10395, 0, 135135, 0, 2027025,\dots$.

In \verb+theorems.txt+, the limit as $s\to \infty$ of the first 16 standardized central moments of $X_s$ are shown to coincide with that of the normal distribution, giving strong evidence for the following:
\\
\begin{conj}\label{conj:norm}
$X_s$ is asymptotically normal.  That is, the distribution of $(X_s - \mathbb{E}[X_s]) / \sqrt{Var(X_s)}$ converges to the standard normal distribution as $s \to \infty$.
\end{conj}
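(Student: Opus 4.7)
The natural approach is the method of moments. Since the first 16 standardized central moments have already been shown to match those of the standard normal, the task is to extend this match to all $k$. The plan is to exploit the decomposition from Lemma \ref{lem:qb},
$$G_s(q) = \sum_{m=0}^{\lfloor s/2\rfloor} q^{\binom{m+1}{2}} \binom{s-m}{m}_q,$$
which realizes $X_s$ as a two-stage draw. Let $M_s$ denote the number of parts of a uniformly chosen $p \in P_s$; then $\mathbb{P}(M_s = m) = \binom{s-m}{m}/F_{s+1}$, and conditional on $M_s = m$, the partition obtained by subtracting the staircase $(m,m-1,\dots,1)$ from $p$'s parts is uniform over partitions fitting in an $m \times (s-2m)$ rectangle. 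Hence
$$X_s = \binom{M_s+1}{2} + Y_{M_s,\, s-2M_s},$$
where $Y_{a,b}$ is the size of a uniform partition in an $a \times b$ box.

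The proof would then proceed in three steps. First, show that $M_s$ is itself asymptotically normal: the bivariate generating function $\sum_{s,m} \binom{s-m}{m}\,x^m y^s = 1/(1-y-xy^2)$ has a dominant $y$-singularity that moves analytically with $x$, so standard techniques (e.g., Hwang's quasi-power theorem) yield a Gaussian law for $M_s$ with mean and variance both of order $s$. Second, invoke the classical asymptotic normality of $Y_{a,b}$ as $a, b \to \infty$ proportionally, which is a saddle-point statement about $\binom{a+b}{a}_q$ near $q=1$. Third, combine these into a joint CLT for $(M_s,\, Y_{M_s,\, s-2M_s})$; since $X_s$ is then a smooth function (with leading quadratic term $\binom{M_s+1}{2}$) of an asymptotically bivariate Gaussian, $X_s$ itself is asymptotically Gaussian. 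Matching the resulting variance against the formula in Theorem 2.1(ii) serves as a consistency check.

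The main obstacle will be obtaining the conditional CLT for $Y_{m,\,s-2m}$ \emph{uniformly} over the $O(\sqrt{s})$-wide window around the peak of $M_s$, and then patching this to the marginal CLT for $M_s$ with errors that remain $o(1)$ after standardization on the scale $\sigma \sim c\,s^{3/2}$ dictated by Theorem 2.1(ii). A more algebraic alternative — closer in spirit to the rest of this paper — would be to push the recurrence \eqref{eq:rec} into linear recurrences with polynomial-in-$s$ coefficients for all moments simultaneously, then extract a closed-form $C$-finite asymptotic for $\mathbb{E}[X_s^k]$ whose leading term is $(k-1)!!\,\sigma^k$ for even $k$ and $o(\sigma^k)$ for odd $k$. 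That sidesteps probabilistic machinery but requires a symbolic argument uniform in $k$, whereas the methods used in this paper only handle finitely many $k$ at a time.
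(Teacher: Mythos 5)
You should first be aware that the paper does not prove this statement: it is explicitly labeled a conjecture, supported only by the computation that the first sixteen standardized central moments of $X_s$ converge to those of the standard normal, and the paper's only hint at a strategy is the closing remark that one might track the leading terms of the moment expressions via the recurrence \eqref{eq:rec}, in the spirit of [Z1]. So there is no proof in the paper to match yours against; what you have written is a proof plan for an open problem, and it should be judged as such.

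That said, your two-stage decomposition is sound and is a genuinely different (and more probabilistic) route than the moment-recurrence approach the paper gestures at. Conditional on the number of parts $M_s=m$, Lemma \ref{lem:qb} does make the reduced partition uniform on the $m\times(s-2m)$ box, the marginal law $\mathbb{P}(M_s=m)=\binom{s-m}{m}/F_{s+1}$ is correct, and the bivariate generating function $1/(1-y-xy^2)$ does put $M_s$ squarely in quasi-power territory. But there are real gaps beyond the uniformity issue you flag yourself. First, all three sources of fluctuation --- the staircase term $\binom{M_s+1}{2}$, the drift of the conditional mean $m(s-2m)/2$ as $m$ varies, and the conditional fluctuation of $Y_{m,s-2m}$ --- live on the same scale $s^{3/2}$, so none can be discarded, and the step ``a smooth function of a bivariate Gaussian is Gaussian'' is false as stated: what you actually need is a delta-method linearization of $\binom{M_s+1}{2}+\tfrac12 M_s(s-2M_s)$ around $\mathbb{E}M_s$, with the quadratic correction $O_P\bigl((M_s-\mathbb{E}M_s)^2\bigr)=O_P(s)=o(s^{3/2})$ explicitly controlled, combined with a conditional CLT for the centered $Y$, and a check that the resulting variance (variance of the conditional mean plus mean of the conditional variance) matches Theorem 2.1(ii). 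Second, the asymptotic normality of the size of a uniform partition in an $a\times b$ box as $a,b\to\infty$ proportionally is ``classical'' only in the sense of being provable by standard local-limit analysis of $\binom{a+b}{a}_q$ near $q=1$; it needs a precise citation or a proof, and it must hold uniformly over $m$ in the $\Theta(\sqrt{s})$ window where $M_s$ concentrates. None of these obstacles looks fatal, but until they are filled in this is a program rather than a proof, and the statement rightly remains a conjecture.
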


Note that in [EZ], the limiting distribution of ``size of an $(s,t)$-core partition'' (with the distinct parts condition dropped) was proven to follow an \emph{ab}normal distribution.  

An approach inspired by [Z1] might be useful in proving Conjecture \ref{conj:norm}.  The main idea is to keep track of the \emph{leading terms} in the expressions of the moments, and perhaps use \eqref{eq:rec} to derive a recurrence for the limiting moments.

\section{Using the Maple package}
The Maple package \verb+core.txt+ and sample theorems \verb+theorems.txt+ accompanying this paper may be found at the following URL:
 \\ \url{http://www.math.rutgers.edu/~az202/Z}.  
 
 To use the Maple package, place \verb+core.txt+ in the working directory and execute
  \verb+read(`core.txt`);+.  
  
  To see the main procedures, execute 
  \verb+Help();+. 
  
For help on a specific procedure, use \verb+Help(<procedure name>);+.  
  
  
\section*{Acknowledgement} 
The author thanks Dr. Doron Zeilberger for introducing this project to him and  guiding his research in the right direction.  The author also wishes to thank the two referees for their insightful and
careful reports that helped improve the exposition and the accuracy
of the paper.
\\
\section{References}
\begin{itemize}
\item[{[A]}]
T. Amdeberhan, Theorems, problems, and conjectures (version 20 April, 2016), preprint, 
\\
\url{http://129.81.170.14/~tamdeberhan/conjectures.html}. 
\item[{[AHJ]}]
D. Armstrong, C. R. H. Hanusa, B. C. Jones, Results and conjectures on simultaneous core partitions,  European J. Combin. 41 (2014) 205-220.
\item[{[EZ]}]
S. B. Ekhad, D. Zeilberger, Explicit expressions for the variance and higher moments of the size of a simultaneous core partition and its limiting distribution, The Personal Journal of Shalosh B. Ekhad and Doron Zeilberger, posted Aug. 30, 2015,
\\
\url{http://www.math.rutgers.edu/~zeilberg/mamarim/mamarimhtml/stcore.html}.
\item[{[PWZ]}]
M. Petkovsek, H. S. Wilf, D. Zeilberger, A=B, A K Peters, Ltd., Wellesley, MA, 1996.
\item[{[S]}]
A. Straub, Core partitions into distinct parts and an analog of Euler's theorem, European J. Combin. 57 (2016) 40-49.
\item[{[WZ]}]
H. S. Wilf, D. Zeilberger,
An algorithmic proof theory for hypergeometric (ordinary and ``q'') multisum/integral identities, Invent. Math. 108 (1992) 575-633.
\item[{[X]}]
H. Xiong, Core partitions with distinct parts, preprint, arXiv:1508.07918, 2015.
\item[{[Z1]}]
D. Zeilberger, The Automatic Central Limit Theorems Generator (and Much More!), in: I. Kotsireas, E. Zima, (Eds.), Advances in Combinatorial Mathematics: Proceedings of the Waterloo Workshop in Computer Algebra 2008, Springer Verlag, Berlin, 2010, pp. 165-174.
\item[{[Z2]}]
D. Zeilberger, The C-finite Ansatz,
Ramanujan Journal 31(2013) 23-3.
\end{itemize}

\end{document}